\newcommand{\excise}[1]{}
\newtheorem{thm}{Theorem}[section]
\newtheorem{lemma}[thm]{Lemma}
\theoremstyle{definition}
\newcommand\<{\langle}
\newcommand\1{\mathbf{1}}
\newcommand\CC{{\mathbb C}}
\newcommand\RR{{\mathbb R}}
\newcommand\ZZ{{\mathbb Z}}
\newcommand\kk{\Bbbk}
\newcommand\del{\partial}
\renewcommand\>{\rangle}
\renewcommand\AA{{\mathbb A}}
\newcommand\ini{\operatorname{in}}
\newcommand\Orb{{\rm Orb}}
\newcommand\rank{\operatorname{rank}}
\newcommand\ann{\operatorname{ann}}
\newcommand\Var{\operatorname{Var}}
\newcommand\gr{\mathrm{gr}}
\begin{document}

\mbox{}\vspace{-4ex}
\title{Systems of parameters and holonomicity of $A$-hypergeometric systems}

\author{Christine Berkesch}
\address{Department of Mathematics, Duke University, Box 90320, Durham, NC 27708}
\email{cberkesc@math.duke.edu}

\author{Stephen Griffeth}
\address{Instituto de Mathem\'atica y F\'isica, Universidad de Talca,
Camino Lircay, Talca, Chile}
\email{sgriffeth@inst-mat.utalca.cl}

\author{Ezra Miller}
\address{Department of Mathematics, Duke University, Box 90320, Durham, NC 27708}
\email{ezra@math.duke.edu}

\thanks{EM had support from NSF grant DMS-1001437.  SG acknowledges
the financial support of Fondecyt Proyecto Regular 1110072}

\date{22 January 2013}

\makeatletter\@namedef{subjclassname@2010}{\textup{2010} Mathematics
Subject Classification}\makeatother
\subjclass[2010]{Primary: 
33C70; 
Secondary: 
13N10, 
14M25, 
16S32, 
33C99} 

\begin{abstract}
The main result is an elementary proof of holonomicity for
$A$-hypergeometric systems, with no requirements on the behavior of
their singularities, originally due to Adolphson \cite{adolphson}
after the regular singular case by Gelfand and
Gelfand~\cite{gelfand--gelfand}.  Our method yields a direct de~novo
proof that $A$-hypergeometric systems form holonomic families over
their parameter spaces, as shown by Matusevich, Miller, and
Walther~\cite{MMW}.
\end{abstract}

\maketitle

\section*{Introduction}

An $A$-hypergeometric system is the $D$-module counterpart of a toric
ideal.  Solutions to $A$-hypergeometric systems are functions, with a
fixed infinitesimal homogeneity, on an affine toric variety.
The solution space of an $A$-hypergeometric system behaves well in
part because the system is holonomic, which in particular implies that
the vector space of germs of analytic solutions at any nonsingular
point has finite dimension.

This note provides an elementary proof of holonomicity for arbitrary
$A$-hypergeometric systems, relying only on the statement that a
module over the Weyl algebra in $n$ variables is holonomic precisely
when its characteristic variety has dimension at most~$n$~(see~\cite{gabber} or~\cite[Theorem~1.12]{borel}), along with standard facts about transversality of
subvarieties and about Krull dimension.  In particular, our proof requires no assumption
about the singularities of the $A$-hypergeometric system;
equivalently, the associated toric ideal need not be standard graded.
Holonomicity was proved in the regular singular case by Gelfand and
Gelfand~\cite{gelfand--gelfand}, and later by
Adolphson~\cite[\S3]{adolphson} regardless of the behavior of the
singularities of the system.  Adolphson's proof relies on careful
algebraic analysis of the coordinate rings of a collection of
varieties whose union is the characteristic variety of the system.
Another proof of the holonomicity of an $A$-hypergeometric system, by
Schulze and Walther~\cite{schulze-walther-duke}, yields a more general
result: for a weight vector~$L$ from a large family of possibilities,
the $L$-characteristic variety for the $L$-filtration is a union of
conormal varieties and hence has dimension~$n$; holonomicity follows
when $L = (0,\dots,0,1,\dots,1)$ induces the order filtration on the
Weyl algebra.  The $L$-filtration method uses an explicit
combinatorial interpretation of initial ideals of toric ideals, which
requires a series of technical lemmas.

Holonomicity of $A$-hypergeometric systems forms part of the statement
and proof, by Matusevich, Miller, and Walther~\cite{MMW}, that
$A$-hypergeometric systems determine holonomic families over their
parameter spaces.  The new proof of that statement here serves as a
model suitable for generalization to hypergeometric systems for
reductive groups, in the sense of Kapranov \cite{kapranov98}.

The main step (Theorem~\ref{t:main}) in our proof is an easy geometric
argument showing that the Euler operators corresponding to the rows of
an integer matrix~$A$ form part of a system of parameters on the
product $\kk^n \times X_{\hspace{-.1ex}A}$, where $\kk$ is any
algebraically closed field and $X_{\hspace{-.1ex}A}$ is the toric
variety over~$\kk$ determined by~$A$.  This observation leads quickly
in Section~\ref{s:holonomic} to the conclusion that the characteristic
variety of the associated $A$-hypergeometric system has dimension at
most~$n$, and hence that the system is holonomic.  Since the algebraic
part of the proof holds when the entries of~$\beta$ are considered as
independent variables that commute with all other variables, the
desired stronger consequence is immediate: the $A$-hypergeometric
system forms a holonomic family over its parameter space
(Theorem~\ref{t:holonomic}).

\section{Systems of parameters via transversality}\label{s:systems}

Fix a field~$\kk$.  Let $x = x_1,\dots,x_n$ and $\xi = \xi_1,\dots,\xi_n$
be sets of coordinates on $\kk^n$ and let $x\xi$ denote the column
vector with entries $x_1\xi_1, \ldots, x_n\xi_n$.  Given a rectangular
matrix $L$ with $n$ columns, write $Lx\xi$ for the vector of bilinear
forms given by multiplying $L$~times~$x\xi$.

\begin{lemma}\label{l:transverse}
Let $\kk^{2n} = \kk^n_x \times \kk^n_\xi$ have coordinates $(x,\xi)$
and let $X \subseteq \kk^n_\xi$ be a subvariety.  If $L$ is an $\ell
\times n$ matrix with entries in~$\kk$, then the variety $\Var(L
x\xi)$ of $L x\xi$ in~$\kk^{2n}$ is transverse to $\kk^n \times X$ at
any smooth point of\/~$\kk^n \times X$ whose $\xi$-coordinates are all
nonzero.
\end{lemma}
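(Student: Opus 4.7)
The plan is to verify transversality via the conormal characterization: two subvarieties $V_1, V_2$ of a smooth ambient space $M$ meet transversely at~$p$ precisely when their (Zariski) conormal spaces $N_p^*V_1$ and $N_p^*V_2$ intersect only at the zero covector in $T_p^*M$. I would apply this with $V_1 = \Var(Lx\xi)$, $V_2 = \kk^n \times X$, and $M = \kk^{2n}$, at an arbitrary smooth point $p = (x,\xi)$ of~$V_2$ all of whose $\xi$-coordinates are nonzero.

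First I would describe the two conormal spaces explicitly. Because $p$ is a smooth point of $\kk^n \times X$, its Zariski tangent space is $\kk^n \oplus T_\xi X$, so $N_p^*(\kk^n \times X) = 0 \oplus N_\xi^* X$; concretely, it consists of covectors of the form $(0,\eta)$ inside $(\kk^n)^* \oplus (\kk^n)^*$. On the other side, $N_p^* V_1$ is contained in the span of the differentials of the defining functions, namely
\[
d(Lx\xi)_i \;=\; \sum_j L_{ij}\bigl(\xi_j\, dx_j + x_j\, d\xi_j\bigr), \qquad i = 1,\dots,\ell.
\]

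Next I would show this intersection is trivial. Any common covector $\omega = \sum_i c_i\, d(Lx\xi)_i$ lying in $N_p^*(\kk^n \times X)$ has vanishing $dx$-component, which gives $\sum_i c_i L_{ij}\xi_j = 0$ for every~$j$. Since every $\xi_j$ is nonzero, we may cancel to obtain $\sum_i c_i L_{ij} = 0$ for every~$j$, that is, $L^{T}c = 0$. But then the $d\xi$-component of $\omega$, namely $\bigl(\sum_i c_i L_{ij}\, x_j\bigr)_j$, also vanishes, so $\omega = 0$ and the two conormal spaces meet only at the origin.

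The main obstacle, such as it is, is choosing a formulation of transversality in which the possible singularities of $\Var(Lx\xi)$ cause no trouble; the Zariski conormal version accomplishes this cleanly, without any assumption on the rank of~$L$ or the smoothness of~$V_1$ at~$p$. Everything else reduces to the short linear-algebra computation above, whose essential ingredient is the invertibility of $\mathrm{diag}(\xi)$, that is, the nonvanishing of every coordinate of~$\xi$.
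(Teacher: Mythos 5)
Your proof is correct and is essentially the paper's argument in dual form: the paper shows the tangent spaces of $\Var(Lx\xi)$ and $\kk^n\times X$ sum to all of $\kk^{2n}$ (by solving $L(q)y=-L(p)\eta$ explicitly via $y_i=-p_i\eta_i/q_i$), whereas you show the corresponding conormal spaces meet only in the zero covector, and these are equivalent by the duality $(U+W)^\perp=U^\perp\cap W^\perp$. Both computations turn on the same single point, the invertibility of $\operatorname{diag}(\xi)$, so there is no gap and nothing essentially new in the route taken.
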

\begin{proof}
It suffices to prove the statement after passing to the algebraic
closure of~$\kk$, so assume $\kk$ is algebraically closed.  Let
$(p,q)$ be a smooth point of $\kk^n \times X$ that lies in
$\Var(Lx\xi)$ and has all coordinates of~$q$ nonzero.  The tangent
space to $\kk^n \times X$ at~$(p,q)$ contains $\kk^n \times \{0 \}$.
The tangent space~$T_{(p,q)}$ to $\Var(Lx\xi)$ is the kernel of the
$\ell \times 2n$ matrix $[L(q)\ L(p)]$, where $L(p)$
(respectively,~$L(q)$) is the $\ell \times n$ matrix that results
after multiplying each column of~$L$ by the corresponding coordinate
of~$p$ (respectively,~$q$).  Since the $q$ coordinates are all
non-zero, $T_{(p,q)}$ projects surjectively onto the last $n$
coordinates; indeed, if $\eta \in \kk^n_\xi$ is given, then taking
$y_i = -p_i\eta_i/q_i$ yields $y \in \kk^n_x$ with $L(q)y + L(p)\eta =
0$.
Thus the tangent spaces at $(p,q)$ sum to the ambient space, so the
intersection is transverse.
\end{proof}

The next result applies the lemma to an affine toric variety~$X$.  A
fixed $d \times n$ integer matrix $A = [a_1\ a_2\ \cdots\ a_{n-1}\
a_n]$ defines an action of the algebraic torus $T = (\kk^*)^d$ on
$\kk^n_\xi$~by
$$%
  t \cdot \xi = (t^{a_1}\xi_1, \ldots, t^{a_n}\xi_n).
$$
The orbit $\Orb(A)$ of the point $\1 = (1, \ldots, 1) \in \kk^n$ is
the image of an algebraic map $T \to \kk^n$ that sends $t \to t \cdot
\1$.  The closure of $\Orb(A)$ in~$\kk^n$ is the affine toric variety
$X_{\hspace{-.1ex}A} = \Var(I_A)$ cut out by the \emph{toric ideal}
$$%
  I_A
  =
  \<\xi^u - \xi^v \mid Au = Av\>
  \subseteq
  \kk[\xi],
$$ 
of~$A$ in the polynomial ring $\kk[\xi] = \kk[\xi_1,\dots,\xi_n]$.
The $T$-action induces an \emph{$A$-grading} on~$\kk[\xi]$ via
$\deg(\xi_i) = a_i$, and the semigroup ring $S_A = \kk[\xi]/I_A$ is
$A$-graded \cite[Chapters~7--8]{cca}.

For any face~$\tau$ of the real cone $\RR_{\geq 0} A$ generated by the
columns of~$A$, write $\tau \preceq A$ and let $\1^\tau \in \{0,1\}^n
\subset \kk^n$ be the vector with nonzero entry $\1^\tau_i = 1$
precisely when $A$ has a nonzero column $a_i \in \tau$.  The variety
$X_{\hspace{-.1ex}A}$ decomposes as a finite disjoint union
$X_{\hspace{-.1ex}A} = \bigsqcup_{\tau\preceq A} \Orb(\tau)$ of
orbits, where $\Orb(\tau) = T \cdot \1^\tau$.  Each orbit has
dimension $\dim\Orb(\tau) = \rank(A_\tau)$, where $A_\tau$ is the
submatrix of~$A$ consisting of those columns lying in~$\tau$, and
$\dim X_{\hspace{-.1ex}A} = \rank(A)$.
 
\begin{thm}\label{t:main}
The ring $\kk[x,\xi]/\big(I_A+\<Ax\xi\>\big)$ has Krull dimension~$n$.
In particular, if $A$ has rank~$d$ then the forms $Ax\xi$ are part of a system of parameters for $\kk[x] \otimes_\kk S_A$.
\end{thm}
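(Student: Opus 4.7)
The plan is to prove the dimension statement by a two-sided inequality.  Since $\dim S_A = \rank A = d$, the ring $\kk[x]\otimes_\kk S_A$ has Krull dimension $n+d$, and adjoining the $d$ forms $Ax\xi$ lowers this by at most $d$ by Krull's height theorem.  Hence $\dim\kk[x,\xi]/(I_A+\<Ax\xi\>) \geq n$, and once equality is established, the ``system of parameters'' conclusion follows formally from the definition.

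For the reverse inequality, my approach is to stratify the variety $V := \Var(I_A+\<Ax\xi\>) \subseteq \kk^{2n}$ via the orbit decomposition $X_{\hspace{-.1ex}A} = \bigsqcup_{\tau \preceq A} \Orb(\tau)$ and bound $\dim V_\tau \leq n$ for each piece $V_\tau := V \cap (\kk^n \times \Orb(\tau))$.  The key observation is that on $\Orb(\tau)$ the coordinate $\xi_i$ is nonzero precisely when $a_i$ is a nonzero column in $\tau$, so the equation $Ax\xi=0$ restricted to such a $\xi$ is a linear system in $x$ whose coefficient matrix equals $A_\tau$ after rescaling each column lying in $\tau$ by the nonzero scalar $\xi_i$.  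Such rescaling preserves the rank, so the coefficient matrix has rank $\rank(A_\tau) = \dim\Orb(\tau)$, the fibre of $V_\tau$ over $\xi$ is a linear subspace of $\kk^n$ of dimension $n-\rank(A_\tau)$, and therefore $\dim V_\tau \leq \dim\Orb(\tau) + (n-\rank(A_\tau)) = n$.  Since there are only finitely many faces $\tau$, this yields $\dim V = \max_\tau \dim V_\tau \leq n$.

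This per-stratum argument is a direct linear-algebra incarnation of Lemma~\ref{l:transverse}, applied with $L=A_\tau$ to the smooth variety $\Orb(\tau)$, whose nonzero-column coordinates are nonzero by construction; the free $x$-coordinates indexed by $i$ with $a_i \notin \tau$ contribute the remaining dimensions.  The main obstacle I anticipate is precisely that Lemma~\ref{l:transverse} cannot be applied once and for all to $\kk^n \times X_{\hspace{-.1ex}A}$: the lower-dimensional orbits both render that ambient space singular and force some $\xi_i$ to vanish, violating the hypotheses of the lemma.  Feeding the transversality input in orbit by orbit is what the stratification accomplishes, and it is the device that makes the entire dimension bound uniform over the closed variety $V$.
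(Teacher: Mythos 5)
Your argument is correct, and its skeleton---the lower bound via Krull's height theorem and the upper bound via the orbit stratification of $X_{\hspace{-.1ex}A}$---coincides with the paper's. Where you genuinely diverge is in how the bound $\dim V_\tau \le n$ is extracted on each stratum. The paper first projects $\big(\kk^n\times\Orb(\tau)\big)\cap\Var(Ax\xi)$ onto $\kk^\tau\times\kk^\tau$ (the fibers of that projection absorb the $x_i$ with $a_i\notin\tau$) and then invokes Lemma~\ref{l:transverse}: the intersection with $\Var(A_\tau x_\tau\xi_\tau)$ is transverse, and transversality converts the codimension of $\Var(A_\tau x_\tau\xi_\tau)$---which equals $\dim\Orb(\tau)$---into the desired count. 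You instead fix $\xi\in\Orb(\tau)$ and observe that $Ax\xi=0$ is then a linear system in $x$ whose coefficient matrix has columns $\xi_i a_i$, hence rank $\rank(A_\tau)$, so every fiber of $V_\tau\to\Orb(\tau)$ is a linear space of dimension $n-\rank(A_\tau)$ and the fiber-dimension theorem gives $\dim V_\tau\le\dim\Orb(\tau)+n-\rank(A_\tau)=n$. This is a legitimate and arguably more elementary route: it needs neither the smoothness of $\kk^n\times\Orb(\tau)$ nor any tangent-space computation, only additivity of fiber dimension; the price is that it is tailored to this situation, whereas Lemma~\ref{l:transverse} is stated for an arbitrary subvariety $X\subseteq\kk^n_\xi$ and an arbitrary matrix~$L$. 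Two small points to tidy: the first sentence of the theorem does not assume $\rank A=d$, so in your lower bound you should replace $d$ by $\rank A$ throughout (the entries of $Ax\xi$ span a space of dimension at most $\rank A$, which is all Krull's theorem needs); and, as in the proof of Lemma~\ref{l:transverse}, one should pass to the algebraic closure of $\kk$ at the outset so that the Krull dimension of the quotient ring may be computed as the dimension of the variety $V$ stratified by torus orbits.
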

\begin{proof}
Let $\kk^\tau \subseteq \kk^n$ be the subspace consisting of vectors
with~$0$ in coordinate~$i$ if $a_i \notin \tau$, and let $|\tau|$ be
its dimension.  Since $\kk[x,\xi]/I_A = \kk[x] \otimes_\kk S_A$ has
dimension $n + \rank(A)$ and the number of $\kk$-linearly independent
generators of $\<Ax\xi\>$ is at most $\rank(A)$, the Krull dimension
in question is at least~$n$.  Hence it suffices to prove that
$\big(\kk^n \times \Orb(\tau)\big) \cap \Var(A x\xi) \subseteq \kk^n
\times \kk^\tau$ has dimension at most~$n$.
Let $x_\tau$ and $\xi_\tau$ denote the subsets corresponding to~$\tau$
in the variable sets $x$ and~$\xi$, respectively.  The projection of
the intersection onto the subspace $\kk^\tau \times \kk^\tau$ has
image contained in
$$%
  \big(\kk^\tau \times \Orb(\tau)\big) \cap \Var(A_\tau x_\tau\xi_\tau)
  \subseteq \kk^\tau \times \kk^\tau.
$$ 
It therefore suffices to show that the dimension of this latter
intersection is at most $|\tau|$.  By Lemma~\ref{l:transverse}, the
intersection is transverse in $\kk^\tau \times \kk^\tau$.  But the
dimension of $\Orb(\tau)$ is the codimension of $\Var(A_\tau x_\tau
\xi_\tau)$ in $\kk^\tau \times \kk^\tau$, which completes the proof. 
\end{proof}

\section{Hypergeometric holonomicity}\label{s:holonomic}

In this section, the matrix $A$ is a $d\times n$ integer matrix of full rank $d$.
Let 
$$%
D = \CC\<x,\del \,\big|\, [\del_i,x_j] = \delta_{ij} \text{ and }
    [x_i,x_j] = 0 = [\del_i,\del_j]\>
$$
denote the Weyl algebra over the complex numbers~$\CC$, where $x =
x_1,\dots,x_n$ and $\del_i$ corresponds to $\frac{\del}{\del x_i}$.
This is the ring of $\CC$-linear differential operators on $\CC[x]$.

For $\beta\in\CC^d$, the \emph{$A$-hypergeometric system} with
parameter~$\beta$ is the left $D$-module
$$%
  M_A(\beta) 
  =
  D/D\cdot(I_A^\del,\{E_i-\beta_i\}_{i=1}^d),
$$
where $I_A^\del = \<\del^u - \del^v \mid Au = Av\> \subseteq
\CC[\del]$ is the toric ideal associated to~$A$ and
$$%
  E_i-\beta_i =\sum_{j=1}^na_{ij}x_j\del_j - \beta_i
$$
are \emph{Euler operators} associated to~$A$.

The \emph{order filtration}~$F$ filters~$D$ by order of differential
operators.  The \emph{symbol} of an operator~$P$ is its image $\ini(P)
\in \gr^F D$.  Writing $\xi_i = \ini(\del_i)$, this means $\gr^F D$ is
the commutative polynomial ring $\CC[x,\xi]$.  The
\emph{characteristic variety} of a left $D$-module~$M$ is the variety
in $\AA^{2n}$ of the associated graded ideal $\gr^F\ann(M)$ of the
annihilator of~$M$.  A nonzero $D$-module is \emph{holonomic} if its
characteristic variety has dimension~$n$; this is equivalent to
requiring that the dimension be at most~$n$; see~\cite{gabber}
or~\cite[Theorem~1.12]{borel}.
The \emph{rank} of a holonomic $D$-module $M$ is the (always finite)
dimension of $\CC(x)\otimes_{\CC[x]}M$ as a vector space
over~$\CC(x)$; this number equals
the dimension of the vector space of germs of analytic solutions
of~$M$ at any nonsingular point
in~$\CC^n$~\mbox{\cite[Theorem~1.4.9]{SST}}.

Viewing the $A$-hypergeometric system $M_A(\beta)$ as having a varying
parameter $\beta \in \nolinebreak\CC^d$, the rank of $M_A(\beta)$ is
upper semicontinuous as a function of~$\beta$ \cite[Theorem~2.6]{MMW}.
This follows by viewing $M_A(\beta)$ as a \emph{holonomic family}
\cite[Definition~2.1]{MMW} para\-metrized by $\beta\in\CC^d$.  By
definition, this means not only that $M_A(\beta)$ is holonomic for
each~$\beta$, but also that it satisfies a coherence condition
over~$\CC^d$, namely: after replacing $\beta$ with variables $b =
b_1,\dots,b_d$, the module $\CC(x)\otimes_{\CC[x]} M_A(b)$ is finitely
generated over $\CC(x)[b]$.  (The definition of holonomic family in \cite{MMW} allows
sheaves of $D$-modules over arbitrary complex base schemes, but that
generality is not needed here.)

The derivation of the holonomic family property for $M_A(b)$ from the
holonomicity of the $A$-hypergeometric system is more or less the same
as in \cite[Theorem~7.5]{MMW}, which was phrased in the generality of
Euler--Koszul homology of toric modules.  The brief deduction here
isolates the steps necessary for $A$-hypergeometric systems; its
brevity stems from the special status of affine semigroup rings among
all toric modules \cite[Definition~4.5]{MMW}.

\begin{thm}\label{t:holonomic}
The module $M_A(b)$ forms a holonomic family over $\CC^d$ with
coordinates~$b$.  In more detail, as a $D[b]$-module the parametric
$A$-hypergeometric system $M_A(b)$ satisfies:
\begin{enumerate}[\quad1.]
\item\label{t:holonomic.1}%
  the fiber $M_A(\beta) = M_A(b)\otimes_{\CC[b]}\CC[b]/\<b-\beta\>$ is
  holonomic for all $\beta$; and
\item\label{t:holonomic.2}%
  the module $\CC(x)\otimes_{\CC[x]} M_A(b)$ is finitely generated
  over $\CC(x)[b]$.
\end{enumerate}
\end{thm}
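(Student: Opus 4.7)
The plan is to extract both statements from Theorem~\ref{t:main} via the order filtration. Equip $D[b]$ with the order filtration in which $x$ and $b$ have degree~$0$ and each $\del_i$ has degree~$1$, so the associated graded is the polynomial ring $\CC[x,\xi,b]$. The key symbol computation is independent of the parameters: because $b_i$ sits in filtration degree~$0$, the symbol of $E_i - b_i = \sum_j a_{ij}x_j\del_j - b_i$ is simply $(Ax\xi)_i$, while the symbols of $\del^u - \del^v \in I_A^\del$ are $\xi^u - \xi^v \in I_A$. Therefore, relative to the natural good filtration, $\gr M_A(\beta)$ is a quotient of $\CC[x,\xi]/(I_A + \<Ax\xi\>)$ and $\gr M_A(b)$ is a quotient of $\CC[x,\xi,b]/(I_A + \<Ax\xi\>)$.

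Part~\ref{t:holonomic.1} is then immediate: by Theorem~\ref{t:main}, $\CC[x,\xi]/(I_A + \<Ax\xi\>)$ has Krull dimension~$n$, so the characteristic variety of $M_A(\beta)$ has dimension at most~$n$, proving holonomicity.

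For Part~\ref{t:holonomic.2}, set $N = \CC(x) \otimes_{\CC[x]} M_A(b)$. Since localization is exact and commutes with $\gr$, the module $\gr N$ is a quotient of $\CC(x)[b] \otimes_{\CC[x]} R$, where $R = \CC[x,\xi]/(I_A+\<Ax\xi\>)$. A standard filtration argument (given in the next paragraph) reduces finite generation of~$N$ over $\CC(x)[b]$ to showing that $\CC(x) \otimes_{\CC[x]} R$ is a finite-dimensional $\CC(x)$-vector space. But the retraction $\xi \mapsto 0$ exhibits $\CC[x]$ as a subring of~$R$ (since neither $I_A$ nor $Ax\xi$ involves a $\xi$-free relation), so $R$ is a finitely generated $\CC[x]$-algebra with $\dim R = \dim\CC[x] = n$, whence the generic fiber of $\CC[x] \to R$ is zero-dimensional, yielding the desired finiteness.

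The step requiring care is the filtration-lifting. Since $\gr N$ is finitely generated over the degree-zero subring $\CC(x)[b] \subseteq \CC(x)[\xi,b]$, the good filtration $F^\bullet N$ is bounded above, and each graded piece $\gr^k N$ is finitely generated over the Noetherian ring $\CC(x)[b]$. Induction on~$k$ using the short exact sequence $0 \to F^{k-1}N \to F^k N \to \gr^k N \to 0$ then shows each $F^k N$, and hence $N$ itself, is finitely generated over $\CC(x)[b]$. No ingredient beyond Theorem~\ref{t:main} is required.
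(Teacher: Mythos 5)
There is a genuine gap, and it sits exactly at the point the paper is designed to handle. You assert that the symbol of $\del^u - \del^v \in I_A^\del$ under the order filtration is $\xi^u - \xi^v$. This is true only when $|u| = |v|$: the symbol of an operator is its \emph{top-order} part, so if $|u| > |v|$ the symbol of $\del^u - \del^v$ is the monomial $\xi^u$ alone. (More structurally, $\gr^F$ of any left ideal is homogeneous in $\xi$, so an inhomogeneous binomial like $\xi_1^2 - \xi_2$ can never lie in it; for $A = [1\ 2]$ the characteristic variety of $M_A(\beta)$ lies over $\Var(\xi_1)$, not over the parabola $X_A = \Var(\xi_1^2-\xi_2)$.) Consequently $\gr^F M_A(\beta)$ is a quotient of $\CC[x,\xi]/\<\ini(I_A), Ax\xi\>$, where $\ini(I_A)$ is the initial ideal for the order filtration, and \emph{not} in general a quotient of $\CC[x,\xi]/(I_A + \<Ax\xi\>)$. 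Theorem~\ref{t:main} bounds the dimension of the latter ring, so it does not directly bound the characteristic variety. Your argument is valid precisely when the rowspan of $A$ contains $[1\ \cdots\ 1]$ (the standard $\ZZ$-graded, i.e.\ regular singular, case already treated by Gelfand--Gelfand); in that case $\ini(I_A) = I_A$ and everything you wrote goes through. But the point of the theorem is the general case, and there your identification of $\gr M_A(\beta)$, and hence both parts~\ref{t:holonomic.1} and~\ref{t:holonomic.2}, breaks down.

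The missing idea is the homogenization step: the paper passes to the $(d+1)\times(n+1)$ matrix $\hat A$ obtained by adding a row of $1$'s and a column $(1,0,\dots,0)$, and uses the isomorphism $\CC[\xi]/\ini(I_A) \cong \CC[\hat\xi]/\<I_{\hat A},\xi_0\>$ to realize $\Var(\ini(I_A))$ as a union of torus-orbit closures inside $X_{\hat A}$, to which the transversality argument of Theorem~\ref{t:main} again applies. The remainder of your write-up is essentially the paper's: the reduction of part~\ref{t:holonomic.2} to finite-dimensionality of $R(x)$ over $\CC(x)$ via the degree-zero placement of $b$, the filtration-lifting induction, and the ``dimension $n$ implies finite degree over $\CC^n_x$'' conclusion all match the intended argument. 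So the fix is local --- replace $I_A$ by $\ini(I_A)$ throughout and supply the reduction to the standard graded case --- but as written the proof only covers the homogeneous case.
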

\begin{proof}
Since $R = \CC[x,\xi]/\<\ini(I_A), Ax\xi\>$ surjects onto $\gr^F
M_A(\beta)$,
it is enough to show that the ring~$R$ has dimension $n$.  If
$M_A(\beta)$ is standard $\ZZ$-graded (equivalently, the rowspan
of~$A$ over the rational numbers contains the row $[1\ 1\ \cdots\ 1\
1]$ of length~$n$), then $\ini(I_A) = I_A\subseteq\CC[\xi]$, and the
result follows from Theorem~\ref{t:main}.

When $M_A(\beta)$ is not standard $\ZZ$-graded, let $\hat{A}$ be the
$(d+1) \times (n+1)$ matrix obtained by adding a row of $1$'s across
the top of~$A$ and then adding as the leftmost column $(1,0,\dots,0)$.
If $\xi_0$ denotes a new variable corresponding to the leftmost column
of~$\hat{A}$, and $\hat{\xi} = \{\xi_0\}\cup\xi$, then
$\CC[\xi]/\ini(I_A) \cong \CC[\hat{\xi}]/\<I_{\hat{A}},\xi_0\>$.  In
particular,
$$%
  \frac{\CC[\hat{x},\xi]}{\<\ini(I_A), Ax\xi\>}
  \cong 
  \frac{\CC[\hat x,\hat\xi]}{\<I_{\hat A},\xi_0,\hat A\hat x\hat\xi\>}, 
$$
where $\hat x = \{x_0\} \cup x$.  Since $\<I_{\hat A},\xi_0\>$ is
$\hat A$-graded and $\hat A$ has a row $[1\ 1\ \cdots\ 1\ 1]$, we have
reduced to the case where $M_A(\beta)$ is $\ZZ$-graded, completing
part~\ref{t:holonomic.1}.

With $R$ as in part~\ref{t:holonomic.1}, the ring $R[b]$ surjects
onto~$\gr^F M_A(b)$, so it suffices for part~\ref{t:holonomic.2} to
show that $R[b]$ becomes finitely generated over $\CC(x)[b]$ upon
inverting all nonzero polynomials in~$x$.  Since the ideal
$\<\ini(I_A), Ax\xi\>$ has no generators involving $b$~variables, it
suffices to show that $R(x)$ itself has finite dimension
over~$\CC(x)$.  The desired result from the statement proved for
part~\ref{t:holonomic.1}: any scheme of dimension~$n$ has finite degree over~$\CC^n_x$.
\end{proof}

\end{document}